\numberwithin{equation}{section}
\newtheorem{theorem}{Theorem}[section] %
\newtheorem{lemma}[theorem]{Lemma} %
\newtheorem{corollary}[theorem]{Corollary} %
\newtheorem{problem}[theorem]{Problem} %
\newtheorem{remark}[theorem]{Remark} %
\begin{document}
\title{On arithmetic progressions of positive integers avoiding $p+F_m$ and $q+L_n$}

\author{  Rui-Jing Wang\footnote{ E-mail: wangruijing271@163.com}\\
\small  School of Mathematical Sciences, \\
\small  Jiangsu Second Normal University,  Nanjing, Jiangsu 210013, China\\
}
\date{}
\maketitle \baselineskip 18pt \maketitle \baselineskip 18pt

{\bf Abstract.} In this paper, it is proved that there is an arithmetic progression of positive integers such that each of which is expressible neither as $p+F_m$ nor as $q+L_n$, where $ p,q $ are primes, $ F_m $ denotes the $ m $-th Fibonacci number and $ L_n $ denotes the $ n $-th Lucas number.  \vskip 3mm

{\bf Keywords and phrases:}  Fibonacci numbers; Lucas numbers; arithmetic progressions; covering systems; primes.

{\bf 2020 Mathematics Subject Classification:}  11P32; 11B39.


\vskip 5mm

\section{Introduction}


In 1849, de Polignac\cite{dePolignac1} conjectured that every odd number greater than $ 3 $ can be
expressed as the sum of a prime and a power of two. But soon, he\cite{dePolignac2} found a counterexample $ 959. $ 
In 1950, van der Corput\cite{vanderCorput1950} showed that a positive proportion of positive odd numbers cannot be written as the sum of a prime and a power of two. A system of congruences $n\equiv 
a_i\pmod{m_i} \ (1\le i\le t)$ is called a covering system if every integer
$ n $ satisfies at least one of the congruences. In the same year, Erd\H os\cite{Erdos1950} proved that there is an arithmetic progression of positive odd numbers such that each of which cannot be represented as the sum of a prime and a power of two
by using a covering system. 


Let $\mathcal{P}$ be the set of all primes and $\mathcal{F}=\{F_m: m=0,1,2,\ldots\}$ the sequence of all Fibonacci numbers, where $F_0=0$, $F_1=1$ and $F_m=F_{m-1}+F_{m-2}$ $(m\geq 2)$. In 2012, Jones~\cite{Jones2012} proved that there exist infinitely many positive integers such that each of which is expressible neither as $p+f$ nor as $-p+f$, where $p\in \mathcal{P},\ f\in\mathcal{F}.$ In 2014, Ismailescu and Shim\cite{Ismailescu2014} showed that there exist infinitely many integers that cannot
be expressed as $\pm p\pm f$ for any choice of the signs, where $p\in \mathcal{P},\ f\in\mathcal{F}.$ In 2016, \v{S}iurys\cite{Siurys2016} proved that there is an arithmetic progression of positive integers such that each of which cannot be represented as the form $p+f,$ where $p\in \mathcal{P},\ f\in\mathcal{F}.$ Let $\mathcal{L}=\{L_n: n=0,1,2,\ldots\}$ be the sequence of all Lucas numbers, where $L_0=2$, $L_1=1$ and $L_n=L_{n-1}+L_{n-2}$ $(n\ge 2). $ In 2017, Ismailescu\cite{Ismailescu2017} showed that there exist infinitely many integers that cannot
be written as $\pm p\pm l$ for any choice of the signs,  where $p\in \mathcal{P},\ l\in\mathcal{L}.$ 
For more relevant research, one may refer to \cite{ChenDing,ChenDing2,Chen2024,Ding2022,Ding20222,Ding20223,Ding2021,Elsholtz,Li2008,Pan2011,Romanoff,Yuan2004}.

For any positive integer $n$, let 
$$r_f(n)=|\{ (p,m) : n=p+F_m,\ p\in \mathcal{P},\ F_m\in \mathcal{F}\} |,$$ $$r_l(n)=|\{ (p,m) : n=p+L_m,\ p\in \mathcal{P},\ L_m\in \mathcal{L}\} |.$$
Inspired by the ideas of \cite{Jones2012,Ismailescu2014,Ismailescu2017}, in this paper, we consider the set $$ B=\{ n>1 : r_f(n)=r_l(n)=0\}. $$ Based on a covering system given by Chen and Wang\cite{ChenWang2024} in 2024 to prove that both the sets $ \{ n : r_f(n)=1\} $ and $ \{ n : r_f(n)=0\} $ have positive lower asymptotic densities, we construct two covering systems and prove the following result.
\begin{theorem}\label{thm1} Let
	\begin{eqnarray*} M&=&4622887311048560370676603365714605438732035554897681130826120780882327\\
		&& 893281434118684817634570358523156027263412324830,\\
		N&=&3939763412672581121898182599728603280412336895559113342123449529012811\\
		&&376139243610743695185350000191300835935299152713,
	\end{eqnarray*}
	then $ M k+N\in B $ for every integer $k\geq 0.$
\end{theorem}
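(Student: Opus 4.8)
The plan is to combine the covering-system method of Erd\H os with the periodicity of the Fibonacci and Lucas sequences modulo primes. Recall that for every prime $p$ the sequence $(F_m\bmod p)_{m\ge 0}$ is purely periodic with period $\pi_F(p)$ (the Pisano period), and $(L_\ell\bmod p)_{\ell\ge 0}$ is purely periodic with period $\pi_L(p)\mid\pi_F(p)$; in particular, on any residue class $m\equiv a\pmod d$ with $\pi_F(p)\mid d$ the value $F_m\bmod p$ is a fixed constant, and likewise for $L_\ell$. The idea is to produce \emph{two} finite covering systems of $\mathbb{Z}$ — one "for the Fibonacci index", congruences $m\equiv a_i\pmod{d_i}$ for $i\in I$, and one "for the Lucas index", congruences $\ell\equiv b_j\pmod{e_j}$ for $j\in J$ — and to attach to each congruence a prime, $p_i$ to the $i$-th Fibonacci congruence and $q_j$ to the $j$-th Lucas congruence, with all of the $p_i$ and $q_j$ distinct, such that $F_m\equiv c_i\pmod{p_i}$ whenever $m\equiv a_i\pmod{d_i}$ and $L_\ell\equiv c_j'\pmod{q_j}$ whenever $\ell\equiv b_j\pmod{e_j}$. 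Setting $M:=\prod_{i\in I}p_i\cdot\prod_{j\in J}q_j$, the Chinese Remainder Theorem then produces a residue $N$ with $N\equiv c_i\pmod{p_i}$ for all $i\in I$ and $N\equiv c_j'\pmod{q_j}$ for all $j\in J$; the explicit $M$ and $N$ in the statement are precisely the output of this computation.

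Granting such a pair of systems, the conclusion is immediate. Fix $k\ge 0$ and put $n=Mk+N$. Given $m\ge 0$, pick $i\in I$ with $m\equiv a_i\pmod{d_i}$; since $p_i\mid M$ we get $n\equiv N\equiv c_i\equiv F_m\pmod{p_i}$, so $p_i\mid n-F_m$, and similarly $q_j\mid n-L_\ell$ for a suitable $j\in J$, for every $\ell\ge 0$. Because $F_m$ and $L_\ell$ grow geometrically while $n$ is astronomically large, for each fixed $n$ the gaps between consecutive Fibonacci (resp. Lucas) numbers near $n$ far exceed $\max_i p_i$ (resp. $\max_j q_j$), so there is at most one index $m$ with $0<n-F_m\le\max_i p_i$; for every other $m$ the integer $n-F_m$ is either $\le 0$, hence not prime, or a proper multiple of $p_i$, hence composite — and likewise on the Lucas side. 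The finitely many "crossover" indices are excluded by the explicit choice of $M$ and $N$ (equivalently, one checks directly that $n-F_m$ is never equal to the attached prime $p_i$ and $n-L_\ell$ never equal to $q_j$). Hence $r_f(n)=r_l(n)=0$, i.e. $n\in B$, for every $k\ge 0$.

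The substantive work is the construction of the two covering systems. For the Fibonacci side I would take as input the covering system of Chen and Wang~\cite{ChenWang2024}, whose moduli are divisors of Pisano periods of small primes and which already comes equipped with the primes $p_i$ and residues $c_i$; the relevant identity is $F_{m+d}=F_mF_{d+1}+F_{m-1}F_d$, which, when $d$ is a multiple of the rank of apparition $\alpha(p)$ (so that $p\mid F_d$), gives $F_{m+d}\equiv F_{d+1}F_m\pmod p$, where $F_{d+1}\equiv F_{d-1}\pmod p$ has multiplicative order $\pi_F(p)/\alpha(p)\in\{1,2,4\}$; so enlarging the modulus $d$ by that factor makes $F_m\bmod p$ genuinely constant on each residue class. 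For the Lucas side, since $L_\ell=F_{\ell-1}+F_{\ell+1}$ the sequence $(L_\ell\bmod q)_{\ell\ge 0}$ has period dividing $\pi_F(q)$, so the \emph{same} families of moduli are available; I would mirror the Chen--Wang construction, compute the constants $c_j'=L_{b_j}\bmod q_j$, and — this is the one genuinely new constraint — choose the labeling primes $q_j$ so that $\{q_j:j\in J\}$ is disjoint from $\{p_i:i\in I\}$, so that the Chinese Remainder Theorem applies to the full list and $M$ is exactly the product of all primes involved.

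The main obstacle, and the bulk of the paper, is therefore this explicit bookkeeping: verifying that the chosen congruences genuinely cover $\mathbb{Z}$, that each attached prime has the required Pisano period (so that the stated congruence on $F_m$ or $L_\ell$ really holds on its class), that the two prime lists are disjoint, and that the colossal Chinese-Remainder computation produces precisely the $M$ and $N$ displayed in Theorem~\ref{thm1} — together with the routine but necessary check that no degenerate coincidence $n-F_m=p_i$ or $n-L_\ell=q_j$ occurs. Once all of this is in place, the argument of the second paragraph closes the proof.
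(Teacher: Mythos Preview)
Your overall architecture --- two covering systems, attached primes, CRT to produce the progression --- is the same as the paper's. But two points diverge from the actual construction, and the second is a genuine gap.

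First, you require the primes $p_i$ (and $q_j$) to be pairwise distinct, and moreover $\{p_i\}\cap\{q_j\}=\varnothing$. The paper does \emph{not} do this: the same prime is attached to several congruences within each system (e.g.\ $p=3$ to three Fibonacci congruences of modulus~$8$), and the Fibonacci and Lucas lists share almost all of their primes ($2,3,7,17,19,23,47,2207,\dots$). CRT applies because the residues are arranged to be \emph{consistent} whenever a prime repeats --- that is the ``balance between Fibonacci numbers and Lucas numbers'' the paper stresses in Remark~\ref{remark}. Your disjointness requirement would demand roughly $33+15=48$ distinct primes with Pisano periods dividing $1152$, which is not available; and even if it were, the resulting modulus would not be the $M$ of the statement. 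So at minimum you would be proving a different theorem.

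Second, and more seriously, your disposal of the ``crossover'' case $n-F_m=p_{i_0}$ does not work. You argue that for each fixed $n$ at most one index $m$ can satisfy $0<n-F_m\le\max_ip_i$, and then say the finitely many such indices are checked directly. But $n=Mk+N$ ranges over infinitely many values, and for each $k$ the potentially dangerous index $m$ is different; there is no finite check. The paper handles this uniformly by a congruence argument: since $x\equiv 9\pmod{17}$, $x\equiv 15\pmod{19}$, $x\equiv 46\pmod{47}$ are among the imposed conditions, the hypothetical equation $x=p_{i_0}+F_m$ forces $F_m\equiv 9-p_{i_0}\pmod{17}$ (and similarly mod $19,47$), and one then reads off from the residue tables that the required value is either never attained by $F_m$ at all, or not attained on the residue class $m\equiv a_{i_0}\pmod{m_{i_0}}$. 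The same device, using only $19$ and $47$, disposes of the Lucas side. This step is not ``routine'' --- it is the reason those particular primes and residues were chosen, and it is where the real work of verifying Theorem~\ref{thm1} lies.
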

\begin{remark}Indeed, the proofs in \cite{Jones2012,Ismailescu2014,Ismailescu2017} contain the corresponding arithmetic progressions. We note that our main result cannot be obtained from the intersection of previous results.\end{remark}
\begin{remark} It is clear that
	$$  \{221, 535, 697, 793, 1045, 1075, 1243, 1333, 1347, 1351, 1651, 1961, \ldots\}\subseteq B. $$
	
	By computer programming, we find that there are $ 226844 $ elements of $ B $ less than $ 10^7. $ Although $ M,N $ is inevitablely big due to the method itself, thanks to the exponential growth of $ \mathcal{F} $ and $ \mathcal{L} $, it does not spend much time with the help of a computer on verifying that $ M k+N\in B $ for $0\leq k\leq 10^5 $.\end{remark}
Based on the results of \cite{ChenWang2024}, we give the following problem for further research.
\begin{problem}
	 	Does the set $$ \{ n : r_f(n)=r_l(n)=1\} $$ have a positive lower asymptotic density?
\end{problem}

\section{Preliminaries}


For convenience, we use $\{a_i\pmod{m_i}\}_{i=1}^{t}$ or more simply $\{(a_i,m_i)\}_{i=1}^{t}$ to denote the covering system in the following discussion. An example of covering system is $$\{0\hskip -2.5mm\pmod{2},\quad 0\hskip -2.5mm\pmod{3},\quad 1\hskip -2.5mm\pmod{4},\quad 5\hskip -2.5mm\pmod{6},\quad 7\hskip -2.5mm\pmod{12}\}.$$ The covering system used by Erd\H os\cite{Erdos1950} is $$\{0\hskip -2.5mm\pmod{2},\quad 0\hskip -2.5mm\pmod{3},\quad 1\hskip -2.5mm\pmod{4},\quad 3\hskip -2.5mm\pmod{8},\quad 7\hskip -2.5mm\pmod{12},\quad 23\hskip -2.5mm\pmod{24}\}.$$

\begin{lemma}\label{cs} Let
	\begin{eqnarray*} &&\{ (a_i, m_i ) \}_{i=1}^{33}\\
		&=& \{ (1,3), (2,3),   (3,8), (5,8), (6,8),  (7,16), (9,16), (10,16),(0,48),(24,48),\\
		&& (15, 32),(17, 32),(18, 32), (60,96), (84,96), (9,18),(24,36),(30,36),\\
		&&(33,192),(159,192),
		(1,64),(2,64),(63,64),(12,144),(36,288),(108,288),\\
		&&(546,576),(100,128),(36,128),(324,384),(162,1152),(516,1152),(738,1152)\},
	\end{eqnarray*}
and let
	\begin{eqnarray*} \{ (b_j, n_j) \}_{j=1}^{15}
		&=& \{ (1,3), (2,3), (2,4),  (0,8), (5,8), (7,8),(9,16),(12,48),(36,48),\\
		&&(17, 32), (15,18),(27,36),(3,72),(1,64),(33,192)\} .
	\end{eqnarray*}
Then $\{(a_i, m_i )\}_{i=1}^{33}$ and $\{(b_j, n_j)\}_{j=1}^{15}$ are two covering systems.
\end{lemma}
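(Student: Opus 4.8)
The statement claims two specific systems of congruences are covering systems. The plan is to verify directly that every integer $n$ satisfies at least one congruence in each system. The naive approach — checking all residues modulo the least common multiple of the moduli — is infeasible by hand here, since that lcm is astronomically large (the moduli include $1152$, $576$, $384$, etc., and their lcm is in the thousands, but in fact one should look at the structure: all moduli are of the form $2^a$ or $3\cdot 2^a$ or $9\cdot 2^a$). So the real plan is to organize the verification by a tree/case-analysis on the $2$-adic and $3$-adic digits of $n$.

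First I would treat the $33$-congruence system $\{(a_i,m_i)\}$. I would split on $n \bmod 3$: the congruences $(1,3)$ and $(2,3)$ already handle $n\equiv 1,2\pmod 3$, so it remains to cover all $n\equiv 0\pmod 3$. Among the remaining moduli, one has $m_i$ of the form $2^a$ (which say nothing about divisibility by $3$) and moduli of the form $3\cdot 2^a$ or $9\cdot 2^a$. For $n\equiv 0\pmod 3$, I would then build a binary decision tree on the bits of $n$: first $n\bmod 2$ (is $3,5,6\pmod 8$ satisfied? these require $n\not\equiv 0,1,2,4,7\pmod 8$ — wait, more carefully, I would just recursively descend). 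Concretely, I would write $n$ in base $2$ and, level by level, check which congruences with small power-of-two part are forced; each time a congruence with modulus $2^a$ or $3\cdot2^a$ is satisfied we prune that branch, and we only need to go as deep as the largest $2$-adic valuation among the moduli, namely $2^{7}=128$ giving depth up to $2^{10}\cdot(\text{something})$ — in practice the depth is bounded because the largest modulus is $1152 = 2^7\cdot 3^2$. I would present this as a finite table: list the residue classes mod $\mathrm{lcm} = 1152$ (and handle the mod-$3$ and mod-$9$ splitting separately), and confirm each is covered. The same procedure applies to the $15$-congruence system $\{(b_j,n_j)\}$, whose moduli have lcm dividing $576$, so its verification table is smaller.

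The honest way to write this up is: state that verification is a finite check, describe the reduction (split off the residues mod $3$ and mod $9$ handled by the pure-mod-$3$ and mod-$9$ congruences, then organize the power-of-two parts into a decision tree of bounded depth), exhibit the decision tree or the explicit list of uncovered-then-covered classes, and remark that the computation has been confirmed by computer. The main obstacle is purely organizational/bookkeeping: there is no clever idea required, but the case analysis is large enough that one must present it systematically (e.g., as a tree rooted at the split $n\equiv 0\pmod 3$, then branching on successive binary digits) so that the reader can check it, and one must be careful that the moduli that are not multiples of $3$ (the pure powers of two like $64$, $128$, $192=64\cdot 3$ — note $192$ is a multiple of $3$, but $64$ and $128$ are not) genuinely cover the residues they are asked to, independent of $n\bmod 3$. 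I would double-check the arithmetic that $1$ divides the relevant lcm's and that no residue class slips through at the maximal depth of the tree; that boundary case is where an error would most likely hide.
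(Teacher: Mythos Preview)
Your approach is correct and is essentially the same as the paper's: both reduce the lemma to a finite verification over residues modulo the lcm of the moduli. The paper's entire proof is to note that $\mathrm{lcm}(m_1,\ldots,m_{33})=1152$ and $\mathrm{lcm}(n_1,\ldots,n_{15})=576$ and say that the claim is then easily verified by calculation.

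Two small corrections to your write-up. First, your opening remark that the lcm is ``astronomically large'' and that direct checking is ``infeasible by hand'' is simply wrong: every $m_i$ divides $2^7\cdot 3^2=1152$ and every $n_j$ divides $2^6\cdot 3^2=576$, as you yourself note later, so one only has to check $1152$ (respectively $576$) residue classes --- tedious but entirely doable, and trivial by computer. Second, because the lcm is this small, the elaborate $2$-adic/$3$-adic decision tree you sketch is unnecessary; it would work, but a flat check of all residues modulo $1152$ (and $576$) is both simpler to describe and simpler to verify. The stray sentence about checking ``that $1$ divides the relevant lcm's'' should be deleted.
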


We note that $$ \text{lcm}(m_1,\ldots,m_{33})=1152,\quad \text{lcm}(n_1,\ldots,n_{15})=576. $$ By calculation, one can easily verify Lemma \ref{cs}.

\section{Proof of Theorem \ref{thm1}}

Our original goal is to construct a set $A$ by covering systems such that if $$n=p+f\ (p\in \mathcal{P},f\in\mathcal{F})\quad \text{or}\quad n=q+l\ (q\in \mathcal{P},l\in\mathcal{L})$$ for any $n\in
A$, then the available choices of primes $ p,q $ can be restricted in a finite subset of $ \mathcal{P} $. For original idea and example, one can refer to \cite{Erdos1950}. 

The numbers $M$ and $N$ come from \eqref{eq2} by the Chinese remainder theorem. For the convenience of the construction, we choose $ 1152=2^{7}\cdot 3^2 $ as the biggest modulo so that the instance in \cite{ChenWang2024} can provide some reference values. 

For any integer $d>1$, let
$$ \chi_f (d)=\min\{ k>0:\
F_{k+1}-F_{1}\equiv F_{k}-F_{0}\equiv 0 \hskip -2.5mm\pmod d \}, $$
$$ \chi_l (d)=\min\{ k>0:\
L_{k+1}-L_{1}\equiv L_{k}-L_{0}\equiv 0 \hskip -2.5mm\pmod d \}. $$
Now that we consider both Fibonacci numbers and Lucas numbers in the construction of Theorem \ref{thm1}, our choices of the following congruences should take a balence between Fibonacci numbers and Lucas numbers, otherwise the biggest modulo $ 1152 $ may not be enough. Actually, we still use all primes $ p $ with $ \chi_f(p)\mid 1152 $ or $ \chi_l(p)\mid 1152 $ after many attempts, which makes $M$ and $ N $ inevitablely big. The feasibility of the discussions after \eqref{eq4f} and \eqref{eq4} usually can be ensured by the properties of Fibonacci numbers and Lucas numbers themselves.

	For any integer $k\geq 0,$ it can be shown with some help of a computer that
	$$F_{3k+1}\equiv F_{3k+2}\equiv L_{3k+1}\equiv L_{3k+2}\equiv 1\hskip -2.5mm\pmod{2}, \quad L_{4k+2}\equiv 3\hskip -2.5mm\pmod{5},$$
	$$ F_{8k+3}\equiv F_{8k+5}\equiv F_{8k+6}\equiv L_{8k}\equiv L_{8k+5}\equiv L_{8k+7}\equiv
	2\hskip -2.5mm\pmod{3}, $$ $$ F_{16k+7}\equiv F_{16k+9}\equiv F_{16k+10}\equiv L_{16k+9}\equiv 6\hskip -2.5mm\pmod{7},$$
	$$F_{48k+0}\equiv
	F_{48k+24}\equiv
	L_{48k+12}\equiv
	L_{48k+36}\equiv
	0\hskip -2.5mm\pmod{23},$$ $$
	F_{32k+15}\equiv F_{32k+17}\equiv F_{32k+18}\equiv L_{32k+17}\equiv 46\hskip -2.5mm\pmod{47},$$ 
	$$ F_{96k+60}\equiv F_{96k+84}\equiv 959\hskip -2.5mm\pmod{1103},\quad F_{18k+9}\equiv L_{18k+15}\equiv 15\hskip -2.5mm\pmod{19},$$ $$ F_{36k+24}\equiv F_{36k+30}\equiv L_{36k+27}\equiv 9\hskip -2.5mm\pmod{17},\quad
	L_{72k+3}\equiv
	4\hskip -2.5mm\pmod{107},$$
	$$ F_{192k+33}\equiv  F_{192k+159}\equiv 2874\hskip -2.5mm\pmod{3167},$$
	$$F_{64k+1}\equiv F_{64k+2}\equiv F_{64k+63}\equiv L_{64k+1}\equiv 1\hskip -2.5mm\pmod{2207},$$ 
	$$ F_{144k+12}\equiv 144\hskip -2.5mm\pmod{103681},\quad L_{192k+33}\equiv 484\hskip -2.5mm\pmod{769},$$ 
	$$ F_{288k+36}\equiv F_{288k+108}\equiv 14930352\hskip -2.5mm\pmod{10749957121},
	$$ 
	$$ F_{576k+546}\equiv 115561578124837690841\hskip -2.5mm\pmod{115561578124838522881},$$ 
	$$  F_{128k+100}\equiv 680\hskip -2.5mm\pmod{1087},\quad F_{128k+36}\equiv 4141\hskip -2.5mm\pmod{4481},$$
	$$ F_{384k+324}\equiv 10314566492783\hskip -2.5mm\pmod{11862575248703},$$ 
	$$F_{1152k+162}\equiv
	65243\hskip -2.5mm\pmod{270143},$$ 
	$$F_{1152k+516}\equiv
	1548008755920\hskip -2.5mm\pmod{25033626656641},$$
	$$F_{1152k+738}\equiv
	784086245571237641757\hskip -2.5mm\pmod{1974737795746080149567}.$$
	\begin{remark}\label{remark} In order to facilitate the interested readers to understand the idea of our construction, the order of congruences is not in accordance with the size of modulos. Since $ \chi_f(5)=20 $ and $ \chi_l(5)=4, $  we often have the opportunity to complete the processing for Lucas numbers first. For example, the following covering system is also enough to prove that there is an arithmetic progression of positive integers such that each of which cannot be represented as the form $p+l$, where $p\in \mathcal{P},\ l\in\mathcal{L}.$ 
	\begin{eqnarray*} 
		&&\{ (1,3), (2,3), (2,4),  (0,8), (5,8), (7,8),(9,16),(12,48),(36,48),\\
		&&(17, 32), (15,18),(27,36),(3,72),(33,96)\}.
	\end{eqnarray*}	
	Moreover, if we only consider the lower asymptotic density of the set $ \{ n : r_l(n)=0\} $, the quantitative result follows from the above covering system will be much better.
	But premature completion of this step is a waste of congruences, resulting in the failure to the later step for Fibonacci numbers.
	
	Besides, we occasionally encounter the same amount of coverage caused by two different choices. For instance,
	$$ F_{8k+1}\equiv F_{8k+2}\equiv F_{8k+7}\equiv L_{8k+1}\equiv L_{8k+3}\equiv L_{8k+4}\equiv
	1\hskip -2.5mm\pmod{3}, $$
	$$ F_{8k+3}\equiv F_{8k+5}\equiv F_{8k+6}\equiv L_{8k}\equiv L_{8k+5}\equiv L_{8k+7}\equiv
	2\hskip -2.5mm\pmod{3}. $$
	After several attempts, we find that the different choices at this time will largely affect the subsequent situation. Choosing the former will often lead to the situation that any choice available can only work for either Fibonacci numbers or Lucas numbers, which is also a waste of congruences and finally leads to a failure. Therefore, we choose the latter, which is different from \cite{ChenWang2024}.
	\end{remark}

	\begin{proof}[Proof of Theorem \ref{thm1}]
	We use $(a,m,r,p)_f$ and $(b,n,s,q)_l$ to denote $F_{mk+a}\equiv r\pmod{p}$ and $L_{nk+b}\equiv s\pmod{q}$, respectively.
	Thus,
	\begin{eqnarray*} &&\{ (a_i, m_i, r_i, p_i )_f \}_{i=1}^{33}\\
		&=& \{ (1,3,1,2)_f, (2,3,1,2)_f,   (3,8,2,3)_f, (5,8,2,3)_f, (6,8,2,3)_f,\\
		&&  (7,16,6,7)_f, (9,16,6,7)_f, (10,16,6,7)_f,(0,48,0,23)_f,(24,48,0,23)_f,\\
		&& (15, 32,46,47)_f,(17, 32,46,47)_f,(18, 32,46,47)_f, \\
		&&(60,96,959,1103)_f, (84,96,959,1103)_f, (9,18,15,19)_f,\\
		&&(24,36,9,17)_f,(30,36,9,17)_f,(33,192,2874,3167)_f,(159,192,2874,3167)_f,\\
		&&(1,64,1,2207)_f,(2,64,1,2207)_f,(63,64,1,2207)_f,(12,144,144,103681)_f,\\
		&&(36,288,14930352,10749957121)_f,(108,288,14930352,10749957121)_f,\\
		&&(546,576,115561578124837690841,115561578124838522881)_f,\\
		&&(100,128,680,1087)_f,(36,128,4141,4481)_f,\\
		&&(324,384,10314566492783, 11862575248703)_f,(162,1152,65243, 270143)_f,\\
		&&(516,1152,1548008755920,  25033626656641)_f,\\
		&&(738,1152, 784086245571237641757, 1974737795746080149567)_f\} .
	\end{eqnarray*}
	\begin{eqnarray*} &&\{ (b_j, n_j, s_j, q_j )_l \}_{j=1}^{15}\\
		&=& \{ (1,3,1,2)_l, (2,3,1,2)_l, (2,4,3,5)_l,  (0,8,2,3)_l, (5,8,2,3)_l, (7,8,2,3)_l,\\
		&&(9,16,6,7)_l,  (12,48,0,23)_l,(36,48,0,23)_l,(17, 32,46,47)_l, (15,18,15,19)_l,\\
		&&(27,36,9,17)_l,(3,72,4,107)_l,(1,64,1,2207)_l,(33,192,484,769)_l\} .
	\end{eqnarray*}
	We note that $$ m_i=\chi_f(p_i)\quad   \text{for}\quad   1\leq i\leq 33 \quad  \text{and} \quad  n_j=\chi_l(q_j) \quad  \text{for}\quad  1\leq j\leq 15. $$ Let 
	\begin{equation}\label{eq2}
		A=\{x\geq 0:x\equiv r_i\hskip -2.5mm\pmod{p_i} \ (1\le i\le 33),\quad x\equiv s_j\hskip -2.5mm\pmod{q_j} \ (1\le j\le 15)\}.\end{equation} 
	It follows from the Chinese remainder theorem that $$A=\{  M k+N : k=0,1,2, \ldots \} .$$
	
	On the one hand, we assume that $x\in A, $ $x=p+f, p\in \mathcal{P}, f\in \mathcal{F}.$ It follows from Lemma \ref{cs} that we can find $1\le i_0\le
		33$ such that $f=F_{m_{i_0}k+a_{i_0}}.$
	Thus,
	$$f=F_{m_{i_0}k+a_{i_0}}\equiv r_{i_0}\hskip -2.5mm\pmod{p_{i_0}}.$$
	By \eqref{eq2}, $x\equiv r_{i_0}\pmod{p_{i_0}}.$ Hence
	$$p=x-F_{m_{i_0}k+a_{i_0}}\equiv r_{i_0}-r_{i_0}\equiv 0\hskip -2.5mm\pmod{p_{i_0}}.$$
	It follows that $p=p_{i_0}.$ Therefore,
	\begin{equation}\label{eq4f}
		x=p_{i_0}+F_{m_{i_0}k+a_{i_0}}.
	\end{equation}
	By \eqref{eq2},
	$$x\equiv 9\hskip -2.5mm\pmod{17},\quad x\equiv 15\hskip -2.5mm\pmod{19},\quad x\equiv 46\hskip -2.5mm\pmod{47}.$$
	By \eqref{eq4f},
	$$F_{m_{i_0}k+a_{i_0}}\equiv 9-p_{i_0}\hskip -2.5mm\pmod{17},\   F_{m_{i_0}k+a_{i_0}}\equiv 15-p_{i_0}\hskip -2.5mm\pmod{19},\  F_{m_{i_0}k+a_{i_0}}\equiv 46-p_{i_0}\hskip -2.5mm\pmod{47}.$$
	
	For every $1\le i_0\le 33,$ we use the properties of $F_m$ to get a contradiction. For the convenience of the readers, we list tables $F_m\pmod{17},$ $F_m\pmod{19}$ and $F_m\pmod{47}.$
	\begin{table}[!ht]
		\centering\begin{tabular}{|c|c|c|c|c|c|c|c|c|c|c|c|c|}
			\hline $m\pmod{36}$  & $1$ & $2$ & $3$ & $4$ & $5$ & $6$ & $7$ &  $8$ & $9$   & $10$ & $11$ & $12$  \\
			\hline $F_m\pmod{17}$  & $1$ & $1$ & $2$ &  $3$ &  $5$ & $8$
			& $13$ & $4$ & $0$ & $4$ &  $4$ & $8$   \\
			\hline $m\pmod{36}$ & $13$ & $14$ & $15$ &  $16$ & $17$ & $18$ & $19$ & $20$ & $21$ & $22$ & $23$ & $24$  \\
			\hline $F_m\pmod{17}$  &  $12$ & $3$ & $15$ & $1$ & $16$ & $0$ &
			$16$ & $16$ &  $15$ &  $14$ & $12$ & $9$ \\
			\hline $m\pmod{36}$ & $25$ & $26$ & $27$ &  $28$ & $29$ & $30$ & $31$ & $32$ & $33$ & $34$ & $35$ & $36$  \\
			\hline $F_m\pmod{17}$  & $4$ & $13$ & $0$ & $13$ &
			$13$ &  $9$ &   $5$ & $14$ & $2$  & $16$ & $1$ & $0$  \\
			\hline
		\end{tabular}
		\caption{$F_m\pmod{17}$\label{tab17f}}
	\end{table}
	\begin{table}[!ht]
		\centering\begin{tabular}{|c|c|c|c|c|c|c|c|c|c|}
			\hline $m\pmod{18}$             & $1$ & $2$ & $3$ & $4$ & $5$ & $6$ &
			$7$ &  $8$ & $9$\\
			\hline $F_m\pmod{19}$   & $1$ & $1$ & $2$ &  $3$ &  $5$ & $8$
			& $13$ & $2$ & $15$\\
			\hline $m\pmod{18}$  & $10$ & $11$ & $12$ & $13$ & $14$ & $15$ &  $16$ & $17$ & $18$  \\
			\hline $F_m\pmod{19}$  & $17$ &  $13$ &
			$11$ &  $5$ &   $16$ & $2$ & $18$ & $1$ & $0$ \\
			\hline
		\end{tabular}
		\caption{$F_m\pmod{19}$\label{tab19f}}
	\end{table}
	\begin{table}[!ht]
		\centering\begin{tabular}{|c|c|c|c|c|c|c|c|c|c|c|c|c|c|c|c|c|}
			\hline $m\pmod{32}$             & $1$ & $2$ & $3$ & $4$ & $5$ & $6$ &
			$7$ &  $8$ & $9$ & $10$ & $11$ & $12$ & $13$ & $14$ & $15$ &  $16$ \\
			\hline $F_m\pmod{47}$  & $1$ & $1$ & $2$ &  $3$ &  $5$ & $8$
			& $13$ & $21$ & $34$  & $8$  & $42$   & $3$  & $45$  & $1$  & $46$  & $0$\\
			\hline $m\pmod{32}$  & $17$ & $18$ & $19$ & $20$ & $21$ & $22$ & $23$ & $24$ & $25$ & $26$ & $27$ &  $28$ & $29$ & $30$ & $31$ & $32$  \\
			\hline $F_m\pmod{47}$  & $46$ &  $46$ &
			$45$ &  $44$ &   $42$ & $39$ & $34$ & $26$ & $13$& $39$ &$5$ & $44$ & $2$ & $46$ & $1$ & $0$   \\
			\hline
		\end{tabular}
		\caption{$F_m\pmod{47}$\label{tab47f}}
	\end{table}
	
	\newpage
	If $m_{i_0}=3,$ then $p_{i_0}=2,$ $F_{3k+a_{i_0}}\equiv 9-p_{i_0}\equiv
	7\pmod{17}.$ By Table \ref{tab17f}, for any integer $m\geq 0,$ we have
	$F_m\not\equiv 7\pmod{17},$ a contradiction.
	
	If $m_{i_0}=8,$ then $p_{i_0}=3,$ $F_{8k+a_{i_0}}\equiv 12\pmod{19},$ a contradiction to Table \ref{tab19f}.
	
	If $m_{i_0}=16,$ then $p_{i_0}=7,$ $F_{16k+a_{i_0}}\equiv 2\pmod{17},$ where $ a_{i_0}\in\{7,9,10\}. $ It is clear that $ a_{i_0}=10 $ leads to a contradiction to Table \ref{tab17f}. Aware that $F_{16k+a_{i_0}}\equiv 39\pmod{47},$ where $ a_{i_0}\in\{7,9\}, $ a contradiction to Table \ref{tab47f}.
	
	If $m_{i_0}=18,$ then $p_{i_0}=19,$ $F_{18k+9}\equiv 27\pmod{47},$  a contradiction to Table \ref{tab47f}.
	
	If $m_{i_0}=32,$ then $p_{i_0}=47,$ $F_{32k+a_{i_0}}\equiv 6\pmod{19},$ a contradiction to Table \ref{tab19f}.
	
	If $m_{i_0}=36,$ then $p_{i_0}=17,$  $F_{36k+a_{i_0}}\equiv 17\pmod{19},$  where $ a_{i_0}\in\{24,30\}, $ a contradiction to Table \ref{tab19f}.
	
	If $m_{i_0}=48,$ then $p_{i_0}=23,$ $F_{48k+a_{i_0}}\equiv
	23\pmod{47},$ a contradiction to Table \ref{tab47f}.
	
	If $m_{i_0}=64,$ then $p_{i_0}=2207,$ $F_{64k+a_{i_0}}\equiv
	12\pmod{19},$ a contradiction to Table \ref{tab19f}.
	
	If $m_{i_0}=96,$ then $p_{i_0}=1103,$ $F_{96k+a_{i_0}}\equiv
	14\pmod{19},$ a contradiction to Table \ref{tab19f}.
	
	If $m_{i_0}=128,$ then $p_{i_0}\in\{1087,4481\},$ $F_{128k+a_{i_0}}\equiv
	40,30\pmod{47},$ a contradiction to Table \ref{tab47f}.
	
	If $m_{i_0}=144,$ then $p_{i_0}=103681,$ $F_{144k+12}\equiv
	11\pmod{17},$ a contradiction to Table \ref{tab17f}.
	
	If $m_{i_0}=192,$ then $p_{i_0}\in\{769,3167\},$ $F_{192k+a_{i_0}}\equiv
	29,28\pmod{47},$ a contradiction to Table \ref{tab47f}.
	
	If $m_{i_0}=288,$ then $p_{i_0}=10749957121,$ $F_{288k+a_{i_0}}\equiv
	11\pmod{17},$ a contradiction to Table \ref{tab17f}.
	
	If $m_{i_0}=384,$ then $p_{i_0}=11862575248703,$ $F_{384k+a_{i_0}}\equiv
	22\pmod{47},$ a contradiction to Table \ref{tab47f}.
	
	If $m_{i_0}=576,$ then $p_{i_0}=115561578124838522881$ and $F_{576k+a_{i_0}}\equiv
	11\pmod{17},$ a contradiction to Table \ref{tab17f}.
	
	If $m_{i_0}=1152,$ then $p_{i_0}\in\{270143,25033626656641,1974737795746080149567\}.$ For $p_{i_0}\in\{270143,25033626656641\},$ we have $F_{1152k+a_{i_0}}\equiv
	12,17\pmod{47},$ a contradiction to Table \ref{tab47f}. For $p_{i_0}=1974737795746080149567,$ we have $F_{1152k+738}\equiv
	13\pmod{19},$ a contradiction to Table \ref{tab19f}.
	
	On the other hand, we assume that $x\in A,$ $x=q+l, q\in \mathcal{P}, l\in \mathcal{L}.$
	It follows from Lemma \ref{cs} that we can find $1\le j_0\le
	15$ such that $l=L_{n_{j_0}k+b_{j_0}}.$ 
	Thus,
	$$l=L_{n_{j_0}k+b_{j_0}}\equiv s_{j_0}\hskip -2.5mm\pmod{q_{j_0}}.$$
	By \eqref{eq2}, $x\equiv s_{j_0}\pmod{q_{j_0}}.$ Hence
	$$q=x-L_{n_{j_0}k+b_{j_0}}\equiv s_{j_0}-s_{j_0}\equiv 0\hskip -2.5mm\pmod{q_{j_0}}.$$
	It follows that $q=q_{j_0}.$ Therefore,
	\begin{equation}\label{eq4}
		x=q_{j_0}+L_{n_{j_0}k+b_{j_0}}.
	\end{equation}
	By \eqref{eq2},
	$$x\equiv 15\hskip -2.5mm\pmod{19},\quad x\equiv 46\hskip -2.5mm\pmod{47}.$$
	By \eqref{eq4},
	$$L_{n_{j_0}k+b_{j_0}}\equiv 15-q_{j_0}\hskip -2.5mm\pmod{19},\quad L_{n_{j_0}k+b_{j_0}}\equiv 46-q_{j_0}\hskip -2.5mm\pmod{47}.$$
	
	For every $1\le j_0\le 15,$ we use the properties of $L_n$ to get a contradiction. For the convenience of the readers, we list $L_n\pmod{19}$ and $L_n\pmod{47}.$ 
	\begin{table}[!ht]
		\centering\begin{tabular}{|c|c|c|c|c|c|c|c|c|c|}
			\hline $n\pmod{18}$  & $1$ & $2$ & $3$ & $4$ & $5$ & $6$ &
			$7$ &  $8$ & $9$\\
			\hline $L_n\pmod{19}$   & $1$ & $3$ & $4$ &  $7$ &  $11$ & $18$
			& $10$ & $9$ & $0$\\
			\hline $n\pmod{18}$  & $10$ & $11$ & $12$ & $13$ & $14$ & $15$ &  $16$ & $17$ & $18$  \\
			\hline $L_n\pmod{19}$  & $9$ &  $9$ &
			$18$ &  $8$ &   $7$ & $15$ & $3$ & $18$ & $2$ \\
			\hline
		\end{tabular}
		\caption{$L_n\pmod{19}$\label{tab19}}
	\end{table}
	\begin{table}[!ht]
		\centering\begin{tabular}{|c|c|c|c|c|c|c|c|c|c|c|c|}
			\hline $n\pmod{32}$  & $1$ & $2$ & $3$ & $4$ & $5$ & $6$ &
			$7$ &  $8$ & $9$ & $10$ & $11$\\
			\hline $L_n\pmod{47}$   & $1$ & $3$ & $4$ &  $7$ & $11$ 
			& $18$ & $29$ & $0$ & $29$ & $29$ & $11$\\ 
			\hline $n\pmod{32}$  & $12$ & $13$ & $14$ & $15$ &  $16$ & $17$ & $18$ & $19$ & $20$ & $21$ & $22$ \\
			\hline $L_n\pmod{47}$    & $40$  & $4$  & $44$  & $1$  & $45$ & $46$ &  $44$ & $43$ &  $40$ & $36$ & $29$\\
			\hline $n\pmod{32}$    & $23$ & $24$ & $25$ & $26$ & $27$ &  $28$ & $29$ & $30$ & $31$ & $32$  &\\
			\hline $L_n\pmod{47}$    & $18$ & $0$ & $18$& $18$ &$36$ & $7$ & $43$ & $3$ & $46$ & $2$   &\\
			\hline
		\end{tabular}
		\caption{$L_n\pmod{47}$\label{tab47}}
	\end{table}
	\newpage
	
	If $n_{j_0}=3,$ then $q_{j_0}=2,$ $L_{3k+b_{j_0}}\equiv 15-q_{j_0}\equiv
	13\pmod{19}.$ By Table \ref{tab19}, for any integer $n\geq 0,$ we have
	$L_n\not\equiv 13\pmod{19},$ a contradiction.
	
	If $n_{j_0}=4,$ then $q_{j_0}=5,$ $L_{4k+2}\equiv 41\pmod{47},$ a contradiction to Table \ref{tab47}.
	
	If $n_{j_0}=8,$ then $q_{j_0}=3,$ $L_{8k+b_{j_0}}\equiv 12\pmod{19},$ a contradiction to Table \ref{tab19}.
	
	If $n_{j_0}=16,$ then $q_{j_0}=7,$ $L_{16k+9}\equiv 39\pmod{47},$ a contradiction to Table \ref{tab47}.
	
	If $n_{j_0}=18,$ then $q_{j_0}=19,$ $L_{18k+15}\equiv 27\pmod{47},$  a contradiction to Table \ref{tab47}.
	
	If $n_{j_0}=32,$ then $q_{j_0}=47,$ $L_{32k+17}\equiv 6\pmod{19},$ a contradiction to Table \ref{tab19}.
	
	If $n_{j_0}=36,$ then $q_{j_0}=17,$  $L_{36k+27}\equiv 17\pmod{19},$  a contradiction to Table \ref{tab19}.
	
	If $n_{j_0}=48,$ then $q_{j_0}=23,$ $L_{48k+b_{j_0}}\equiv
	23\pmod{47},$ a contradiction to Table \ref{tab47}.
	
	If $n_{j_0}=64,$ then $q_{j_0}=2207,$ $L_{64k+1}\equiv
	12\pmod{19},$ a contradiction to Table \ref{tab19}.
	
	If $n_{j_0}=72,$ then $q_{j_0}=107,$ $L_{72k+3}\equiv 33\pmod{47},$ a contradiction to Table \ref{tab47}.
	
	If $n_{j_0}=192,$ then $q_{j_0}=769,$ $L_{192k+33}\equiv
	6\pmod{19},$ a contradiction to Table \ref{tab19}.
	
	This completes the proof of Theorem \ref{thm1}.
\end{proof}

\section{Corollaries}

Let $$ B_f=\{ n>1 : r_f(n)=0\},\quad B_l=\{ n>1 : r_l(n)=0\}. $$
By the Chinese remainder theorem, the following two corollaries can be deduced from the proof of Theorem \ref{thm1}.
\begin{corollary}
	Let
	\begin{eqnarray*} M_f&=&1123655508683096233894389695493505447961799048381240628277073218254640\\
		&&1792062598881141469403328411757364284878802,\\
		N_f&=&2468127909106617673449389274478768677072124807451258375411168391979806\\
		&&255191037873183154992460933770371093595473,
	\end{eqnarray*}
	then $ M_f k+N_f\in B_f $ for every integer $k\geq 0.$
\end{corollary}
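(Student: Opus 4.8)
The plan is to mimic exactly the structure of the proof of Theorem \ref{thm1}, but now retaining only the Fibonacci half of the construction. First I would extract from the list of $33$ congruences $(a_i,m_i,r_i,p_i)_f$ the sub-covering $\{(a_i,m_i)\}_{i=1}^{33}$ (this is the first covering system of Lemma \ref{cs}, which covers $\mathbb{Z}$ with $\mathrm{lcm}=1152$), together with the associated moduli $p_i=\chi_f^{-1}$-preimages, i.e. the primes with $\chi_f(p_i)=m_i$. Then I would define
\begin{equation*}
	A_f=\{x\geq 0: x\equiv r_i\!\!\pmod{p_i}\ (1\le i\le 33)\},
\end{equation*}
and invoke the Chinese remainder theorem (the $p_i$ are distinct primes) to write $A_f=\{M_f k+N_f: k\geq 0\}$, where $M_f=\prod_{i=1}^{33}p_i$ and $N_f$ is the unique residue in $[0,M_f)$ satisfying all the congruences. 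A quick check that the displayed numerical $M_f$ equals the product $2\cdot 3\cdot 7\cdot 23\cdot 47\cdot 1103\cdot 19\cdot 17\cdot 3167\cdot 2207\cdot 103681\cdot 10749957121\cdot 115561578124838522881\cdot 1087\cdot 4481\cdot 11862575248703\cdot 270143\cdot 25033626656641\cdot 1974737795746080149567$ (each prime appearing once) pins down the statement.

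Next, the core argument: suppose $x\in A_f$ and $x=p+F_m$ with $p\in\mathcal P$, $F_m\in\mathcal F$. Since $\{(a_i,m_i)\}_{i=1}^{33}$ is a covering system, write $x=M_f k+N_f$ and find $i_0$ with $m\equiv a_{i_0}\pmod{m_{i_0}}$, say $m=m_{i_0}k'+a_{i_0}$. Because $m_{i_0}=\chi_f(p_{i_0})$, the sequence $F_{m_{i_0}\ell+a_{i_0}}$ is constant modulo $p_{i_0}$, equal to $r_{i_0}$; hence $F_m\equiv r_{i_0}\pmod{p_{i_0}}$, and since $x\equiv r_{i_0}\pmod{p_{i_0}}$ as well, we get $p\equiv 0\pmod{p_{i_0}}$, forcing $p=p_{i_0}$. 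This is precisely the step around \eqref{eq4f}, so $x=p_{i_0}+F_{m_{i_0}k'+a_{i_0}}$. From here one derives a congruence for $F_{m_{i_0}k'+a_{i_0}}$ modulo one of $17,19,47$ by subtracting $p_{i_0}$ from the known residues $x\equiv 9\pmod{17}$, $x\equiv 15\pmod{19}$, $x\equiv 46\pmod{47}$, and reads off from Tables \ref{tab17f}, \ref{tab19f}, \ref{tab47f} that no Fibonacci number is ever congruent to that target value along the relevant residue class of indices. Running through $m_{i_0}\in\{3,8,16,18,32,36,48,64,96,128,144,192,288,384,576,1152\}$ exactly as in the proof of Theorem \ref{thm1} — this is verbatim the first ``On the one hand'' half of that proof — yields a contradiction in every case, so $r_f(x)=0$, i.e. $x\in B_f$.

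The only genuinely new point relative to Theorem \ref{thm1} is bookkeeping: one must check that the Fibonacci-side congruences alone already pin $x$ down modulo $17$, $19$, and $47$ (they do, since $(24,36,9,17)_f$, $(9,18,15,19)_f$ and $(15,32,46,47)_f$ are among the $33$ and impose $x\equiv 9\pmod{17}$, $x\equiv 15\pmod{19}$, $x\equiv 46\pmod{47}$ independently of any Lucas congruence), so the case analysis goes through unchanged. I expect no real obstacle: the hard part is purely the verification that the stated decimal $M_f$ is the product of the $33$ primes and that $N_f$ is the corresponding CRT residue, which is a routine (if large) computation best delegated to a computer, exactly as the paper does for $M,N$. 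I would close by remarking that the symmetric statement for $B_l$ follows identically from the second covering system of Lemma \ref{cs} and the ``On the other hand'' half of the proof of Theorem \ref{thm1}.
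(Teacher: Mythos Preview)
Your proposal is correct and matches the paper's own justification, which simply says the corollary follows from the Fibonacci half of the proof of Theorem \ref{thm1} via the Chinese remainder theorem; your key observation that the residues $x\equiv 9\pmod{17}$, $x\equiv 15\pmod{19}$, $x\equiv 46\pmod{47}$ are already imposed by the Fibonacci-side data alone is exactly what makes the case analysis go through unchanged. One small slip: the $p_i$ for $1\le i\le 33$ are \emph{not} pairwise distinct (e.g.\ $p_i=3$ occurs three times), so $M_f$ is the product of the $19$ distinct primes among them rather than $\prod_{i=1}^{33}p_i$ --- but you list that product correctly, so this is purely notational.
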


\begin{corollary}
	Let
	\begin{eqnarray*} M_l&=&578938092213810,\\
		N_l&=&85206628521871,
	\end{eqnarray*}
	then $ 23(M_l k+N_l)\in B_l $ for every integer $k\geq 0.$
\end{corollary}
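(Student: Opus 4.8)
The plan is to isolate the Lucas half of the construction already carried out in the proof of Theorem~\ref{thm1} and to invoke the Chinese remainder theorem using \emph{only} the primes attached to Lucas numbers. Concretely, I would retain exactly the covering system $\{(b_j,n_j)\}_{j=1}^{15}$ from Lemma~\ref{cs} together with the data $(b_j,n_j,s_j,q_j)_l$ recorded in the proof of Theorem~\ref{thm1}, where $q_j$ is the unique prime with $\chi_l(q_j)=n_j$ and $L_{n_jk+b_j}\equiv s_j\pmod{q_j}$, and set
$$A_l=\{x\ge 0:\ x\equiv s_j\hskip -2.5mm\pmod{q_j}\ (1\le j\le 15)\}.$$
The primes occurring are $q_j\in\{2,3,5,7,17,19,23,47,107,769,2207\}$, and they are pairwise distinct; hence by the Chinese remainder theorem $A_l=\{Mk+N:k\ge 0\}$ with $M=\operatorname{lcm}(q_1,\dots,q_{15})$ equal to the product of these eleven primes and $0\le N<M$.

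First I would explain the factor $23$ that appears outside. Because $\chi_l(23)=48$ and the two classes $(12,48)$, $(36,48)$ satisfy $L_{48k+12}\equiv L_{48k+36}\equiv 0\pmod{23}$, the residue imposed modulo $23$ is $s_j=0$, i.e.\ the congruence is $x\equiv 0\pmod{23}$. Consequently $23\mid M$ and $23\mid N$, so writing $M=23M_l$ and $N=23N_l$ gives $A_l=\{23(M_lk+N_l):k\ge 0\}$; matching $M=23M_l$ and $N=23N_l$ against the stated numerical values is then a routine Chinese-remainder verification. Thus the progression of the corollary is precisely $A_l$.

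Next I would show $A_l\subseteq B_l$. Fix $x\in A_l$ and suppose $x=q+L_m$ with $q\in\mathcal P$, $L_m\in\mathcal L$; a contradiction is sought. Since $\{(b_j,n_j)\}_{j=1}^{15}$ is a covering system (Lemma~\ref{cs}), some index $j_0$ gives $m\equiv b_{j_0}\pmod{n_{j_0}}$, whence $L_m=L_{n_{j_0}k+b_{j_0}}\equiv s_{j_0}\pmod{q_{j_0}}$; combined with $x\equiv s_{j_0}\pmod{q_{j_0}}$ this forces $q\equiv 0\pmod{q_{j_0}}$, so $q=q_{j_0}$ and $x=q_{j_0}+L_{n_{j_0}k+b_{j_0}}$, exactly as at \eqref{eq4}. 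The two auxiliary primes $19$ and $47$ are among the retained primes, so $x\equiv 15\pmod{19}$ and $x\equiv 46\pmod{47}$ continue to hold for every $x\in A_l$; the case analysis over $n_{j_0}\in\{3,4,8,16,18,32,36,48,64,72,192\}$ in the ``On the other hand'' part of the proof of Theorem~\ref{thm1} then applies verbatim, each case producing a value of $L_{n_{j_0}k+b_{j_0}}$ modulo $19$ or $47$ that is forbidden by Table~\ref{tab19} or Table~\ref{tab47}. This contradiction yields $r_l(x)=0$, i.e.\ $x\in B_l$, proving $23(M_lk+N_l)\in B_l$ for all $k\ge 0$.

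The only genuinely new point, and the one I expect to require the most care, is the bookkeeping that pulling the factor $23$ out front is legitimate: one must confirm that $23$ indeed carries residue $s=0$ and that discarding all Fibonacci primes does not disturb the Lucas contradictions, which hinges on $19$ and $47$ surviving in the reduced modulus. Everything after that is a transcription of the Lucas computations already present in the proof of Theorem~\ref{thm1}, so no new table or congruence needs to be established.
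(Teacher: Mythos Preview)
Your proposal is correct and matches the paper's own approach: the paper simply states that the corollary follows ``by the Chinese remainder theorem'' from the proof of Theorem~\ref{thm1}, and you have spelled out exactly that deduction, including the reason the factor $23$ appears (the residue $s=0$ attached to $q=23$) and the observation that the Lucas-side contradictions use only the congruences modulo $19$ and $47$, both of which survive in the reduced system.
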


Recall the covering system 
\begin{eqnarray*} 
	&&\{ (1,3), (2,3), (2,4),  (0,8), (5,8), (7,8),(9,16),(12,48),(36,48),\\
	&&(17, 32), (15,18),(27,36),(3,72),(33,96)\},
\end{eqnarray*}	
which is mentioned in Remark \ref{remark} for Lucas numbers. 
We note that
$$L_{3k+1}\equiv L_{3k+2}\equiv 1\hskip -2.5mm\pmod{2}.$$
By using $$ L_{96k+33}\equiv 261\hskip -2.5mm\pmod{1103} $$ instead of $$L_{64k+1}\equiv 1\hskip -2.5mm\pmod{2207}\quad\text{and}\quad L_{192k+33}\equiv 484\hskip -2.5mm\pmod{769},$$ the following corollary can be deduced from a similar proof of Theorem \ref{thm1}.
\begin{corollary}
	Let
	\begin{eqnarray*} M_l'&=&8653798948830,\\
		N_l'&=&3695757248273,
	\end{eqnarray*}
	then $ M_l' k+N_l'\in B_l $ for every integer $k\geq 0.$
\end{corollary}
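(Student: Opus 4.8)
The plan is to transplant the Lucas half of the proof of Theorem \ref{thm1} to the $14$-term covering system displayed in Remark \ref{remark}, paired with the congruences $L_{n_jk+b_j}\equiv s_j\pmod{q_j}$ obtained from the Lucas list in the proof of Theorem \ref{thm1} by retaining its first $13$ entries and replacing the last two entries, $(1,64,1,2207)_l$ and $(33,192,484,769)_l$, by the single entry $(33,96,261,1103)_l$, i.e.\ $L_{96k+33}\equiv 261\pmod{1103}$. First I would clear the routine checks: that the $14$-term system is indeed a covering system (this is asserted in Remark \ref{remark}); that $\chi_l(1103)\mid 96$, so $L_{96k+33}\equiv 261\pmod{1103}$ for every $k\ge 0$; and that the ten moduli occurring are the pairwise coprime primes $2,3,5,7,17,19,23,47,107,1103$, whose product is exactly $M_l'=8653798948830$, with $N_l'=3695757248273$ the residue cut out by all the congruences. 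Putting
$$A'=\{x\ge 0:\ x\equiv s_j\pmod{q_j}\ (1\le j\le 14)\},$$
the Chinese remainder theorem then yields $A'=\{M_l'k+N_l':k=0,1,2,\dots\}$.

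The central step is identical to the one in Theorem \ref{thm1}. Suppose $x\in A'$ and $x=q+l$ with $q\in\mathcal P$, $l\in\mathcal L$. The covering system provides an index $j_0$ with $l=L_{n_{j_0}k+b_{j_0}}$ for some $k\ge 0$, whence $l\equiv s_{j_0}\pmod{q_{j_0}}$; since also $x\equiv s_{j_0}\pmod{q_{j_0}}$, it follows that $q_{j_0}\mid q$, so $q=q_{j_0}$ and
$$x=q_{j_0}+L_{n_{j_0}k+b_{j_0}}.$$
Because $A'$ also forces $x\equiv 15\pmod{19}$ and $x\equiv 46\pmod{47}$, the Lucas value $L_{n_{j_0}k+b_{j_0}}$ is pinned to a specific residue modulo $19$ or modulo $47$, which I would then contradict against the periodicity tables for $L_n\pmod{19}$ and $L_n\pmod{47}$, namely Tables \ref{tab19} and \ref{tab47}.

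For each $n_{j_0}\in\{3,4,8,16,18,32,36,48,72\}$ the desired contradiction is, word for word, the one already carried out in the Lucas part of the proof of Theorem \ref{thm1}, since the corresponding congruences are unchanged. The only genuinely new instance is $n_{j_0}=96$, $q_{j_0}=1103$: here $x=1103+L_{96k+33}$, and as $1103\equiv 1\pmod{19}$ the relation $x\equiv 15\pmod{19}$ gives $L_{96k+33}\equiv 14\pmod{19}$; but $96k+33\equiv 6k+15\pmod{18}$ runs only through the residues $3,9,15\pmod{18}$, for which Table \ref{tab19} records $L_{96k+33}\equiv 4,\,0,\,15\pmod{19}$, never $14$. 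This contradiction completes the proof, so no member of $A'$ is representable as $q+l$; that is, $M_l'k+N_l'\in B_l$ for every integer $k\ge 0$.

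I expect the only delicate point to be the bookkeeping. One must make sure that discarding the two congruences $(1,64,1,2207)_l$ and $(33,192,484,769)_l$ — which in Theorem \ref{thm1} were used solely to pin $q\in\{2207,769\}$ in the cases $n_{j_0}\in\{64,192\}$, and those indices no longer arise under the new covering system — disrupts none of the remaining contradictions, and that the inserted congruence $(33,96,261,1103)_l$ is consistent with the others. All of this is a finite check, comfortably done by hand or with a computer.
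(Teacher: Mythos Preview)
Your proposal is correct and follows exactly the approach sketched in the paper: use the $14$-term covering system of Remark \ref{remark}, keep the first thirteen Lucas congruences, replace the pair $(1,64,1,2207)_l$, $(33,192,484,769)_l$ by $(33,96,261,1103)_l$, and rerun the Lucas half of the proof of Theorem \ref{thm1}. Your handling of the only new case $n_{j_0}=96$ via $L_{96k+33}\equiv 14\pmod{19}$ and the residues $3,9,15\pmod{18}$ is precisely the missing verification, and the product check $2\cdot3\cdot5\cdot7\cdot17\cdot19\cdot23\cdot47\cdot107\cdot1103=M_l'$ confirms the modulus.
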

\section*{Acknowledgments}


The author is supported by the Natural Science Foundation of the Jiangsu Higher Education Institutions of China (Grant No. 24KJD110001).

\end{document}